\theoremstyle{plain}
\newtheorem*{theorem*}{Theorem}
\newtheorem{lemma}{Lemma}
\theoremstyle{definition}
\theoremstyle{remark}
\newtheorem{remark}{Remark}
\title{Weak approximation of an invariant measure and a low boundary
of the entropy}
\author{B.M. Gurevich\thanks{The work is supported in part
by the RFBR grant 13-01-12410.}\\ MSU and IITP RAS}
\date{}
\begin{document}
\maketitle

\bigskip

\begin{abstract}
For a measurable map $T$ and a sequence of $T$-invariant probability
measures $\mu_n$ that converges in some sense to a $T$-invariant
probability measure $\mu$, an estimate from below for the
Kolmogorov--Sinai entropy of $T$ with respect to $\mu$ is suggested
in terms of the entropies of $T$ with respect to $\mu_1$, $\mu_2$,
\dots.
\end{abstract}
\bigskip

In problems of Ergodic theory and Thermodynamic formalism it is
sometimes necessary to estimate the entropy of a measure preserving
map. If this map acts in a compact metric space and is expansive,
one can use the fact that the entropy is semicontinuous from above
on the space of invariant probability measures. Both conditions ---
the compactedness and expansiveness are essential in this context,
and if at least one of them fails, one has to use other means. One
such a mean is suggested in this note.

We will use standard notation, terminology and results from Entropy
theory (see, e.g., \cite{CFS} -- \cite{R}). Let $T$ be an
automorphism of a measurable space $(X,\mathcal F)$ and $\mu_0$ a
$T$-invariant probability measure. For a finite or countable
infinite partition $\eta$ of $(X,\mathcal F)$, we write $B\in\eta$
and $B\subset\eta$ if the set $B$ is an atom of $\eta$ or a union of
such atoms, respectively.
\begin{theorem*}
\label{main} Assume that for a countable measurable partition $\xi$
of $(X,\mathcal F)$, the entropy $H_{\mu_0}(\xi)$ is finite and that
there exist a sequence of $T$-invariant probability measures $\mu_n$
and sequences of numbers $r_n\in\mathbb N$, $\varepsilon_n>0$ such
that
\begin{equation}
\label{limits} \lim_{n\to\infty}r_n=\infty,\ \
\lim_{n\to\infty}\varepsilon_n=0,\
\limsup_{n\to\infty}h_{\mu_n}(T)\ge h\ge 0,
\end{equation}
\begin{equation}
\label{generator} \xi\text{\ is a generator for\ } (T,\mu_n),\
n\ge0,
\end{equation}
\begin{equation}
\label{uslovie} |\mu_0(A)-\mu_n(A)|\le\varepsilon_n\mu_n(A) \text{\
for all\ } A\in\vee_{i=0}^{r_n}T^{-i}\xi,\ n\ge0.
\end{equation}
Then $h_{\mu_0}(T)\ge h$.
\end{theorem*}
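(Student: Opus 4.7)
The strategy is to compare the partition entropies $H_{\mu_0}(\xi^{r_n})$ and $H_{\mu_n}(\xi^{r_n})$, where $\xi^r:=\vee_{i=0}^r T^{-i}\xi$, and then pass to limits, exploiting that $r_n\to\infty$ and that $\xi$ is a generator for every $\mu_n$, including $\mu_0$.

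First, from (\ref{uslovie}) applied to atoms of $\xi\subset\xi^{r_n}$ we have $(1-\varepsilon_n)\mu_n(A)\le\mu_0(A)\le(1+\varepsilon_n)\mu_n(A)$, and combined with $H_{\mu_0}(\xi)<\infty$ a routine estimate gives $\sup_n H_{\mu_n}(\xi)<\infty$, so that $H_{\mu_n}(\xi^{r_n})$ and $H_{\mu_0}(\xi^{r_n})$ are finite by subadditivity. Next, for $A\in\xi^{r_n}$ write $\mu_0(A)=(1+\delta_A)\mu_n(A)$ with $|\delta_A|\le\varepsilon_n$, expand
\begin{equation*}
-\mu_0(A)\log\mu_0(A)=-(1+\delta_A)\mu_n(A)\log\mu_n(A)-(1+\delta_A)\mu_n(A)\log(1+\delta_A),
\end{equation*}
and sum over $A$. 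The first sum equals $H_{\mu_n}(\xi^{r_n})-\sum_A\delta_A\mu_n(A)\log\mu_n(A)$, and $|\sum_A\delta_A\mu_n(A)\log\mu_n(A)|\le\varepsilon_n H_{\mu_n}(\xi^{r_n})$ since $-\log\mu_n(A)\ge0$; the second sum is bounded by $C\varepsilon_n$ for an absolute constant $C$, using $|\log(1+\delta)|\le2|\delta|$ and $\sum_A\mu_n(A)=1$. This gives the key estimate
\begin{equation*}
H_{\mu_0}(\xi^{r_n})\ge(1-\varepsilon_n)H_{\mu_n}(\xi^{r_n})-C\varepsilon_n.
\end{equation*}

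Dividing by $r_n+1$ and using $(r+1)^{-1}H_{\mu_n}(\xi^r)\ge h_{\mu_n}(T,\xi)=h_{\mu_n}(T)$ (the latter by (\ref{generator}) and Kolmogorov--Sinai), we obtain
\begin{equation*}
\tfrac{1}{r_n+1}H_{\mu_0}(\xi^{r_n})\ge(1-\varepsilon_n)h_{\mu_n}(T)-\tfrac{C\varepsilon_n}{r_n+1}.
\end{equation*}
Along a subsequence $n_k$ on which $h_{\mu_{n_k}}(T)\to\limsup_n h_{\mu_n}(T)\ge h$, the right-hand side tends to a value $\ge h$; meanwhile the left-hand side converges to $h_{\mu_0}(T,\xi)=h_{\mu_0}(T)$ because $r_n\to\infty$ and $\xi$ generates for $\mu_0$. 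Hence $h_{\mu_0}(T)\ge h$.

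The main obstacle is the error estimate. The naive bound $\varepsilon_n H_{\mu_n}(\xi^{r_n})$ grows linearly in $r_n$ and would be fatal, but after dividing by $r_n+1$ it contributes only the harmless factor $(1-\varepsilon_n)\to1$ in front of $h_{\mu_n}(T)$; the residual term $C\varepsilon_n$ must be kept uniform in $r_n$ (which works because the discrepancies $\delta_A$ enter there only through $(1+\delta_A)\log(1+\delta_A)$ weighted by the probabilities $\mu_n(A)$), so that it vanishes after dividing by $r_n+1\to\infty$. This uniformity, made possible by the \emph{relative} closeness condition (\ref{uslovie}) rather than a merely additive one, is what allows the argument to close.
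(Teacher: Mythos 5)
Your proof is correct, and at the global level it follows the same route as the paper: compare the block entropies of $\mu_0$ and $\mu_n$ over $\vee_{i=0}^{r_n}T^{-i}\xi$ using the multiplicative closeness \eqref{uslovie}, divide by the block length, invoke the monotone convergence of $\frac1r H_\mu(\vee_{i=0}^{r}T^{-i}\xi)$ to $h_\mu(T,\xi)$ together with the generator hypothesis, and let $n\to\infty$. The differences are local but worth recording. First, you run the comparison in the mirror direction (a lower bound for $H_{\mu_0}$ in terms of $H_{\mu_n}$, keeping $\mu_n$ as the reference measure), whereas the paper first converts \eqref{uslovie} into a bound relative to $\mu_0$ and then proves $H_{\mu_n}\le(1+2\varepsilon_n)H_{\mu_0}+2\varepsilon_n\ln 3$; either direction closes the argument. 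Second, and more substantively, you replace the paper's Lemma 1 --- a pointwise comparison of $\varphi(t)=-t\ln t$ that splits atoms according to whether $q_i\le 1/(2e)$ and uses monotonicity and derivative bounds of $\varphi$ --- by the exact identity $-\mu_0(A)\log\mu_0(A)=-(1+\delta_A)\mu_n(A)\log\mu_n(A)-(1+\delta_A)\mu_n(A)\log(1+\delta_A)$, which cleanly separates the error into a multiplicative term $\varepsilon_n H_{\mu_n}$ (harmless after division by $r_n$) and an additive term $O(\varepsilon_n)$ uniform in $r_n$; this avoids the case analysis and makes transparent why the \emph{relative} form of \eqref{uslovie} is exactly what keeps the additive error bounded. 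Finally, you make explicit a point the paper leaves implicit: the finiteness of $H_{\mu_n}(\xi)$, which is needed both for your estimates and for the identification $h_{\mu_n}(T,\xi)=h_{\mu_n}(T)$ via the countable-generator form of the Kolmogorov--Sinai theorem.
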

We begin the proof of the Theorem with two simple lemmas.
\begin{lemma}
\label{entrop_razn} Let $\mathbf p:=(p_i)_{i\in\mathbb N}$, $\mathbf
q:=(q_i)_{i\in\mathbb N}$, where $p_i,q_i\ge 0$ for all $i$ and
$\sum_ip_i=\sum_iq_i=1$. Let also $H(\mathbf p):=-\sum_{i\in\mathbb
N}p_i\ln p_i$ (with $0\ln 0=0$) and $H(\mathbf q)$ be defined
similarly. Assume that for some $c\in(0,1/3)$,
\begin{equation}
\label{sravn_ver} |p_i-q_i|\le cq_i,\ \ i=1,2,\dots.
\end{equation}
Then
\begin{equation*}
\label{entrop_razn1} H(\mathbf p)\le(1+c)H(\mathbf q)+c\ln 3.
\end{equation*}
\end{lemma}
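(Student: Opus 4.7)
The plan is to parametrize the hypothesis by writing $p_i = q_i(1 + \delta_i)$ with $|\delta_i| \le c$ (set $\delta_i := 0$ when $q_i = 0$, in which case \eqref{sravn_ver} forces $p_i = 0$ as well). Expanding $\ln p_i = \ln q_i + \ln(1+\delta_i)$ and rearranging,
\[
 -p_i \ln p_i \;=\; -q_i \ln q_i \;-\; \delta_i\, q_i \ln q_i \;-\; q_i(1+\delta_i)\ln(1+\delta_i),
\]
so that summation gives the decomposition $H(\mathbf p) = H(\mathbf q) + S_1 + S_2$, where
\[
 S_1 := -\sum_i \delta_i\, q_i \ln q_i, \qquad S_2 := -\sum_i q_i(1+\delta_i)\ln(1+\delta_i).
\]
The nontrivial case is $H(\mathbf q) < \infty$, which makes all three sums absolutely convergent.

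For $S_1$, since $-\ln q_i \ge 0$ for $q_i \in [0,1]$ and $|\delta_i|\le c$, the triangle inequality gives $|S_1| \le c \sum_i q_i(-\ln q_i) = c\, H(\mathbf q)$. For $S_2$, I would study the one-variable function $\varphi(\delta) := -(1+\delta)\ln(1+\delta)$ on $[-c,c]$. Its derivative $\varphi'(\delta) = -\ln(1+\delta) - 1$ is strictly negative throughout $[-c,c]$ because $c < 1/3 < 1 - 1/e$ forces $1+\delta > 1/e$; hence $\varphi$ is decreasing there and attains its maximum at the left endpoint, giving $\varphi(\delta) \le \varphi(-c) = (1-c)\ln\tfrac{1}{1-c}$ for every admissible $\delta$. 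Summing against $\sum_i q_i = 1$ yields $S_2 \le (1-c)\ln\tfrac{1}{1-c}$.

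Combining these estimates produces the preliminary bound
\[
 H(\mathbf p) \;\le\; (1+c)\, H(\mathbf q) \;+\; (1-c)\ln\tfrac{1}{1-c}.
\]
It remains to verify the purely numerical inequality $(1-c)\ln\tfrac{1}{1-c} \le c \ln 3$ for $c\in(0,1/3)$. This is elementary: set $f(c) := c\ln 3 + (1-c)\ln(1-c)$, note $f(0) = 0$, and compute $f'(c) = \ln\tfrac{3}{1-c} - 1 \ge \ln 3 - 1 > 0$, so $f \ge 0$ on $[0,1/3)$. Substituting closes the proof.

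The one genuinely delicate step is the bound on $S_2$: a cruder estimate like $|\ln(1+\delta_i)|\le -\ln(1-c)$ produces $-(1+c)\ln(1-c)$, which already \emph{exceeds} $c\ln 3$ as $c$ approaches $1/3$. Locating the maximum of $\varphi$ at the endpoint $\delta=-c$ (rather than bounding modulus by modulus) is precisely what yields the sharp constant $\ln 3$ in the statement; the rest of the argument is a matter of tracking signs, using that $\ln q_i \le 0$ throughout.
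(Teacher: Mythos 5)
Your proof is correct, and it takes a genuinely different route from the paper's. The paper works directly with $\varphi(t)=-t\ln t$ and splits the index set according to the size of $q_i$: for $q_i\le 1/(2e)$ it uses that $\varphi$ is increasing on $[0,e^{-1}]$ together with the identity $\varphi((1+c)q_i)=q_i\varphi(1+c)+(1+c)\varphi(q_i)$ and the sign $\varphi(1+c)\le 0$; for $q_i>1/(2e)$ it uses a mean-value estimate with the bound $|\varphi'(t)|\le\ln 3$ on $[(3e)^{-1},1]$, which is where both the constant $\ln 3$ and the restriction $c<1/3$ enter. You instead parametrize $p_i=q_i(1+\delta_i)$ with $|\delta_i|\le c$ and get the exact decomposition $H(\mathbf p)=H(\mathbf q)+S_1+S_2$, bound $S_1$ by $c\,H(\mathbf q)$ and $S_2$ by the endpoint value $(1-c)\ln\frac{1}{1-c}$, and close with the numerical inequality $(1-c)\ln\frac{1}{1-c}\le c\ln 3$. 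All steps check out: the term-by-term identity is right, the three series converge absolutely when $H(\mathbf q)<\infty$ (and the claim is trivial otherwise), the monotonicity of $-(1+\delta)\ln(1+\delta)$ on $[-c,c]$ holds because $1-c>2/3>1/e$, and $f(c)=c\ln 3+(1-c)\ln(1-c)$ is indeed increasing from $f(0)=0$. Your version is more transparent about where the constants come from; in fact it only needs $c<1-1/e$ for the monotonicity step (and $f\ge 0$ holds on all of $[0,1)$), so it establishes the lemma on a slightly larger range of $c$ than stated. The paper's version avoids the algebraic expansion and the auxiliary numerical inequality at the cost of a two-case analysis; the two arguments are of comparable length and either would serve.
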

\begin{proof}
Denote $\varphi(t):=-t\ln t$, $t\ge 0$. It is clear that (a)
$\varphi(t)$ increases when $0\le t\le e^{-1}$, (b) $\varphi(t)\le
0$ when $t\ge 1$, (c) $-1\le\varphi'(t)\le\ln 3$ when $(3e)^{-1}\le
t\le 1$. Hence (see also \eqref{sravn_ver})
\begin{align*}
H(\mathbf p)=&\sum_{i\in\mathbb N}\varphi(p_i)=\sum_{i:q_i\le
1/2e}\varphi(p_i)+\sum_{i:q_i>1/2e}\varphi(p_i)\le\sum_{i:q_i\le
1/2e}\varphi((1+c)q_i)\notag \\
+&\sum_{i:q_i>1/2e}[\varphi(q_i)+|p_i-q_i|\ln 3]=\sum_{i:q_i\le
1/2e}[q_i\varphi(1+c)+(1+c)\varphi(q_i)] \notag \\
+&\sum_{i:q_i>1/2e}[\varphi(q_i)+|p_i-q_i|\ln
3]\le(1+c)\sum_{i\in\mathbb N}\varphi(q_i)+c\ln 3=(1+c)H(\mathbf q)+
c\ln3.
\end{align*}
\end{proof}
\begin{lemma}
\label{AinEta} If $\eta$ is a countable measurable partition of the
space $(X,\mathcal F)$ and if, for probability measures $\mu$ and
$\nu$ on $(X,\mathcal F)$, for every $A\in\eta$ and some
$\varepsilon>0$, we have $|\mu(A)-\nu(A)|\le\varepsilon\nu(A)$, then
the same is true for every $A\subset\eta$.
\end{lemma}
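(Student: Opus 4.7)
The plan is to reduce the claim directly to countable additivity together with the triangle inequality. A set $A\subset\eta$ is by definition a (at most countable) union of atoms of $\eta$, so I would write $A=\bigsqcup_{i} A_i$ with $A_i\in\eta$, apply the measures $\mu$ and $\nu$ termwise, and then collect the pointwise estimate $|\mu(A_i)-\nu(A_i)|\le\varepsilon\nu(A_i)$ by summing.

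Concretely, the key step is the chain
\begin{equation*}
|\mu(A)-\nu(A)|=\Bigl|\sum_{i}\bigl(\mu(A_i)-\nu(A_i)\bigr)\Bigr|\le\sum_{i}|\mu(A_i)-\nu(A_i)|\le\varepsilon\sum_{i}\nu(A_i)=\varepsilon\,\nu(A),
\end{equation*}
where the first equality uses $\sigma$-additivity of both $\mu$ and $\nu$ on the disjoint union, the first inequality is the triangle inequality for the (absolutely convergent) series, the second applies the hypothesis atomwise, and the last equality again uses $\sigma$-additivity of $\nu$.

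There is no real obstacle here; the only thing to be slightly careful about is the legitimacy of rearranging and estimating the series $\sum_i(\mu(A_i)-\nu(A_i))$, which is justified because $\sum_i\mu(A_i)$ and $\sum_i\nu(A_i)$ both converge (to $\mu(A)$ and $\nu(A)$ respectively), so the difference of partial sums converges and the termwise bound can be summed. No measurability issues arise since $\eta$ is a measurable partition and $A$ is a countable union of its atoms, hence measurable.
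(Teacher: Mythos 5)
Your argument is correct and is exactly the ``evident'' proof that the paper omits: decompose $A$ into its constituent atoms, apply $\sigma$-additivity of both measures, and sum the atomwise bound via the triangle inequality. Nothing further is needed.
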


The proof is evident and will be omitted.
\medskip

We continue the proof of the Theorem. For all $k,l\in\mathbb Z$,
$k\le l$, we denote $(\xi^T)_{-l}^{-k}:=\vee_{i=l}^kT^{-i}\xi$.

It is known \cite{R} that if $H_{\mu_0}(\xi)<\infty$ and
\eqref{generator} holds for $n=0$, then
\begin{equation*}
\label{lim_entrop}
h_{\mu_0}(T)=\lim_{n\to\infty}\frac{1}{n}H_{\mu_0}((\xi^T)_{-n}^{-1}),
\end{equation*}
and the sequence on the right hand side is non-increasing.

It easy to verify that if $\varepsilon\le 1/2$, then
$|a-b|\le\varepsilon b$, $a,b\ge 0$ imply that $|a-b|\le
2\varepsilon a$. Therefore by \eqref{uslovie} for all $n$ such that
$\varepsilon_n\le1/2$ and all $A\in\xi(-r_n,0)$, we have
\begin{equation}
\label{uslovie1} |\mu_0(A)-\mu_n(A)|\le2\varepsilon_n\mu_0(A).
\end{equation}

For these $n$, we compare $H_{\mu_0}((\xi^T)_{-r_n}^{-1})$ and
$H_{\mu_n}((\xi^T)_{-r_n}^{-1})$.

An arbitrary numbering of the atoms $A\in(\xi^T)_{-r_n}^{-1}$ yields
a sequence $A_1,A_2,\dots$. Let $p_i:=\mu_n(A_i)$,
$q_i:=\mu_0(A_i)$. By applying Lemma \ref{entrop_razn} for
$c=2\varepsilon_n$ (see \eqref{uslovie1}) we obtain
\begin{equation*}
\label{entrop_razn2}
H_{\mu_n}((\xi^T)_{-r_n}^{-1})\le(1+2\varepsilon_n)
H_{\mu_0}((\xi^T)_{-r_n}^{-1})+2\varepsilon_n\ln 3.
\end{equation*}

For all sufficiently large $n$, this implies that
\begin{align*}
\label{osn_ner}
h_{\mu_n}(T,\xi)\le&\frac{1}{r_n}H_{\mu_n}((\xi^T)_{-r_n}^{-1}) \notag\\
\le&\frac{1}{r_n}(1+2\varepsilon_n)H_{\mu_0}((\xi^T)_{-r_n}^{-1})+
\frac{2}{r_n}\varepsilon_n\ln 3,
\end{align*}
or
\begin{equation*}
\label{osn_ner1} h_{\mu_n}(T,\xi)\le
\frac{1}{r_n}(1+2\varepsilon_n)H_{\mu_0}((\xi^T)_{-r_n}^{-1})+
\frac{2}{r_n}\varepsilon_n\ln 3.
\end{equation*}
Therefore (see \eqref{limits})
$$
h\le\limsup_{n\to\infty}h_{\mu_n}(T,\xi)\le h_{\mu_0}(T).
$$
The proof is completed.
\begin{remark}
\label{zamech} Let us number in an arbitrary way the atoms of the
partition $\xi$, and consider the finite partition $\xi_m$ obtained
from $\xi$ by replacing all atoms of  $\xi$ with labels $\ge m$ by
their union. In the statement of the Theorem, the conditions
$\limsup_{n\to\infty}h_{\mu_n}(T)\ge h$ (see \eqref{limits}) and
$H_{\mu_0}(\xi)<\infty$ can be changed for the single (and weaker)
condition
\begin{equation*}
\label{weak_cond}
\limsup_{m\to\infty}\limsup_{n\to\infty}h_{\mu_n}(T,\xi_m)\ge h.
\end{equation*}

\end{remark}


\begin{thebibliography}{9}

\bibitem{CFS}
I.\,Cornfeld, S.\,Fomin, Ya.\,Sinai. {\it Ergodic theory.} Springer,
1982.

\bibitem{EM}
J.W.\,England, N.F.G.\,Martin. {\it  Mathematical theory of
entropy}. Addison-Wesley, 1981.

\bibitem{R}
V.A.\,Rokhlin  Lectures on entropy theory of transformations with
invariant measure (in Russian). {\it Uspekhi Mat. Nauk},
22(5):3--56(1967).

\end{thebibliography}
\end{document}